\let\mathcal\mathscr
\numberwithin{equation}{section}
\renewcommand{\phi}{\varphi}
\newtheorem{theorem}{Theorem}
\newtheorem*{thm*}{Theorem}
\newtheorem{lemma}{Lemma}
\renewcommand{\le}{\leqslant}
\renewcommand{\ge}{\geqslant}
\theoremstyle{definition}
\newcommand{\dif}{\mathrm{d}}
\begin{document}

\title{Exponential sums with Dirichlet coefficients of $L$-functions}
\author{Stephan Baier}
\maketitle

\begin{abstract}
Improving and extending recent results of the author \cite{Bai}, we conditionally estimate exponential sums with Dirichlet coefficients of $L$-functions, both over all integers and over all primes in an interval. In particular, we establish new conditional results on exponential sums with Hecke eigenvalues and squares of Hecke eigenvalues over primes. We employ these estimates to improve our recent result in \cite{Bai} on squares of Hecke eigenvalues at Piatetski-Shapiro primes under the Riemann Hypothesis for symmetric square $L$-functions for Hecke eigenforms. 
\end{abstract}

\noindent {\bf Mathematics Subject Classification (2000)}: 11F11, 11F30, 11L07, 11M99\newline

\noindent {\bf Keywords}: exponential sums, $L$-functions, Selberg class, Dirichlet series, Hecke eigenvalues, Piatetski-Shapiro primes 

\section{Introduction} \label{intro}
In this paper, by making several refinements, we improve our conditional estimate in \cite{Bai} for exponential sums of the form
\begin{equation} \label{fullsum}
\sum\limits_{n\sim N} a_ne(f(n)), 
\end{equation}
where $a_n$ is the $n$-th coefficient of a Dirichlet series $F(s)$ in a certain extension of the Selberg class (see section \ref{assump}). In particular, we obtain a conditional improvement of a result of Jutila on exponential sums with Hecke eigenvalues. We also derive a conditional estimate for the same sums as in \eqref{fullsum} but restricted to primes, i.e.  
\begin{equation} \label{primesum}
\sum\limits_{p\sim N} a_pe(f(p)),
\end{equation}
which we then use to improve our recent result in \cite{Bai} on squares of Hecke eigenvalues at Piatetski-Shapiro primes.  In \cite{Bai}, we employed Vaughan's identity to relate exponential sums over primes to exponential sums over all integers in intervals. In this paper, we shall use a more direct approach to handle the prime condition, relating the said sums with logarithms of twists of $F(s)$ with Dirichlet characters. 

In the next section, we shall state the assumptions on $F(s)$ and $f(x)$ under which we shall establish our estimates. The following notations will be used throughout this paper.\\ \\
{\bf Notations:}\medskip\\
1) By $\varepsilon$ and $\eta$, we denote arbitrarily small positive numbers.\\
2) The symbol $p$ is reserved for primes, and $\mathbb{P}$ denotes the set of primes.\\
3) The symbol $s$ is reserved for complex numbers, and we write $s=\sigma+it$, $\sigma$ being the real part and $t$ being the imaginary part of $s$.

\section{Assumptions} \label{assump}

{\bf Conditions on $f(x)$:}\medskip\\
We assume that $f:[1,\infty)\rightarrow \mathbb{R}^+$ satisfies the following conditions i)-vii).\medskip\\
i) $f$ is four times continuously differentiable.\\
ii) $f$ is monotonically increasing.\\
iii) $f(x)\asymp f(2x)$ for all $x\ge 1$.\\
iv) $f^{(k)}(x) \asymp f(x)/x^k \mbox{ for all } x\ge 1 \mbox{ and } k=1,2,3,4$.\\
v) $f'(x)+xf''(x)\asymp f(x)/x \mbox{ for all } x\ge 1$.\\
vi) $2f''(x)+xf'''(x)\asymp f(x)/x^2 \mbox{ for all } x\ge 1$.\\
vii) $2f''(x)-xf'''(x)\asymp f(x)/x^2 \mbox{ for all } x\ge 1$.\\ \\
{\bf General conditions on $F(s)$:}\medskip\\
We assume that 
$$
F(s)=\sum_{n=1}^\infty a_nn^{-s}
$$
is a Dirichlet series, absolutely convergent for $\Re s>1$, which satisfies the following conditions a) and b).\medskip\\
a) $F(s)$ has the following properties.\medskip

(I) ({\it Analiticity}) There exists some $m\in \mathbb{N}$ such that $(s-1)^mF(s)$ extends to a holomorphic function of finite order on the half plane $\Re s\ge 1/2$.

(II) ({\it Ramanujan hypothesis}) $a_1=1$ and $a_n \ll_\varepsilon n^\varepsilon$ for any $\varepsilon>0$.

(III) ({\it Euler product}) For $\Re s>1$, the function $F(s)$ can be written as a product over primes in the form
\begin{equation} \label{Euler}
F(s)=\prod_p F_p(s),
\end{equation}
where  $\log F_p(s)$ is a Dirichlet series of the form
\begin{equation} \label{logdev}
\log F_p(s)=\sum_{k=1}^\infty b_{p^k}p^{-ks}
\end{equation}
with complex coefficients $b_{p^k}$ satisfying
\begin{equation} \label{coeffestimate}
b_{p^k}=O(p^{k\theta})
\end{equation}
for some $\theta<1/2$.\\ 
b) For any Dirichlet character $\chi$ define 
\begin{equation} \label{Fschi}
F(s,\chi):=\sum\limits_{n=1}^{\infty} a_n \chi(n) n^{-s} \quad \mbox{for } \Re s>1.
\end{equation}
Then  $(s-1)^mF(s,\chi)$ extends to an entire function again.\\ \\
{\bf Special conditions on $F(s)$:}\medskip\\
For the estimation of the exponential sums in \eqref{fullsum}, we shall assume the following.\medskip\\
c) The family of functions $F(s,\chi)$ satisfies the Lindel\"of Hypothesis in the $t$- and $q$-aspects, i.e.
$$
F\left(\frac{1}{2}+it,\chi\right)\ll |tq|^{\varepsilon} \quad \mbox{for all $|t|\ge 1$, $q\in \mathbb{N}$} \mbox{ and characters $\chi \bmod{q}$}.
$$
For the estimation of the exponential sums over primes in \eqref{primesum}, we shall make somewhat stronger condtions.\medskip\\
c') The family of functions $F(s,\chi)$ satisfies the Riemann Hypothesis, i.e. all zeros with real part $\ge 1/2$ are actually located at the critical line $\Re s=1/2$. Moreover, for every $\varepsilon>0$ there exist $A,B\in \mathbb{R}$, $C,D>0$ such that 
\begin{equation} \label{logbound}
C(q|t|)^A\le |F\left(s,\chi\right)| \le D(q|t|)^{B}  \quad \mbox{for all $\sigma\ge 1/2+\varepsilon$, $|t|\ge 1$, $q\in\mathbb{N}$ and characters $\chi \bmod{q}$.}
\end{equation}
d) Every $\theta>0$ in \eqref{coeffestimate} is admissible.

\section{Results}
Our estimates for exponential sums are as follows.

\begin{theorem} \label{expsum} 
Suppose the conditions {\rm i)-vii)} on $f(x)$ and {\rm a), b), c)} on $F(s)$ in section {\rm \ref{assump}} are satisfied. Fix $\eta>0$. Suppose that $1\le N<N'\le 2N$ and
\begin{equation} \label{anotherfcond1}
N^{3/4+\eta}\le f(N)\le N^{3/2-\eta}.
\end{equation} 
Then 
$$
\sum\limits_{N<n\le N'} a_n e(f(n))\ll_{f,\eta,\varepsilon} N^{3/4+\varepsilon}f(N)^{1/6}.
$$
\end{theorem}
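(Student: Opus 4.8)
The plan is to attack the sum $S := \sum_{N<n\le N'} a_n e(f(n))$ by the classical two-step process: first separate the arithmetic coefficient $a_n$ from the oscillating factor $e(f(n))$ by Dirichlet series techniques (a contour shift / Perron-type argument using the analytic properties a) and b) together with the Lindelöf hypothesis c)), and then estimate the resulting pure exponential sums over dyadic ranges by van der Corput's method, exploiting the derivative conditions iv)--vii) on $f$. Concretely, I would first use partial summation to replace $S$ by an expression involving the partial sums $A(x) := \sum_{N<n\le x} a_n e(f(n))$, but the cleaner route is to write $a_n$ via Perron's formula: for a parameter $T$ to be chosen, $\sum_{N<n\le N'} a_n g(n) = \frac{1}{2\pi i}\int_{c-iT}^{c+iT} F(s)\,\Phi(s)\,ds + \text{error}$, where $\Phi(s) = \sum_{N<n\le N'} g(n) n^{s-1}$ is the (explicitly computable) "exponential-sum transform" with $g(n) = e(f(n))$, and $c = 1+\varepsilon$. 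Shifting the contour to $\Re s = 1/2$, the pole of $F(s)$ at $s=1$ contributes a main term that must be shown to be absorbed (here one uses that $g$ oscillates, so $\Phi(1)$ is itself small by van der Corput — this is where the upper restriction $f(N) \le N^{3/2-\eta}$ in \eqref{anotherfcond1} matters), and on the new line $F(1/2+it) \ll |t|^\varepsilon$ by c), reducing everything to bounding $\int_{-T}^{T} |t|^\varepsilon |\Phi(1/2+it)|\,dt$.

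The heart of the matter is then to estimate $\Phi(1/2+it) = N^{-1/2+it\cdot 0}\cdots$, i.e. the exponential sum $\sum_{N<n\le N'} e(f(n) + \frac{t}{2\pi}\log n) n^{-1/2}$, uniformly in $|t| \le T$. I would apply the van der Corput exponent pair / second-derivative bound: writing $h_t(x) = f(x) + \frac{t}{2\pi}\log x$, the derivative conditions guarantee $h_t'' (x) \asymp f(x)/x^2 \pm t/x^2$, and conditions v)--vii) are precisely what one needs to control $h_t''$ from below (and the mixed derivatives $x h_t''' $, etc.) so that the $B$-process (Poisson summation / stationary phase) applies cleanly even after adding the logarithmic term. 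The standard second-derivative estimate yields $\sum_{N<n\le N'} e(h_t(n)) \ll (f(N)+|t|)^{1/2} + N(f(N)+|t|)^{-1/2}$; after inserting the $n^{-1/2}$ weight (partial summation) and integrating against $|t|^\varepsilon\,dt$ over $|t|\le T$, one optimizes over $T$. Balancing the Perron error (of size roughly $N^{1+\varepsilon}/T$ before the contour shift is accounted for, improved after shifting), the pole contribution, and the van der Corput bound $\asymp N^\varepsilon\big(T^{1/2} f(N)^{1/2} \cdot \text{(something)} + \cdots\big)$ leads to the choice that produces the exponent $N^{3/4+\varepsilon} f(N)^{1/6}$.

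To see heuristically why $N^{3/4} f(N)^{1/6}$ is the right shape: the lower bound $f(N) \ge N^{3/4+\eta}$ in \eqref{anotherfcond1} ensures the oscillation of $e(f(n))$ dominates, so the "trivial from above" term $N$ is beaten; the exponent $1/6$ is the signature of one application of the $B$-process followed by a trivial bound (an exponent pair of the form $(\frac16,\frac23)$ composed appropriately), and the split $3/4 = \frac12 + \frac14$ reflects one $n^{-1/2}$ from the critical line plus the length of summation contributing its square root after Poisson.

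The main obstacle I anticipate is the uniformity in $t$: one needs the van der Corput estimates for $\sum e(f(n) + \frac{t}{2\pi}\log n)$ to hold with implied constants depending only on $f$ (through the $\asymp$-constants in iv)--vii)) and \emph{not} on $t$, across the full range $1 \le |t| \le T$ with $T$ possibly as large as a power of $N$. This requires checking that adding $\frac{t}{2\pi}\log x$ does not destroy the sign conditions on $2f'' \pm x f'''$ — indeed for $|t|$ up to a small constant times $f(N)$ it does not, and for larger $|t|$ the $t$-term dominates and the sum is handled by the monotonicity/size of $\frac{t}{2\pi}\log x$ directly — so one should split the $t$-integral at $|t| \asymp f(N)$ and treat the two ranges separately. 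A secondary technical point is justifying the contour shift (growth of $F(s)$ in vertical strips, finite order from a)(I)) and controlling the horizontal segments, which is routine given the finite-order hypothesis but must be done with the character-twist-free $F(s)$ only, since c) is invoked solely on the line $\Re s = 1/2$.
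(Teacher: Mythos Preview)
Your proposal has a genuine gap: the direct Perron/Mellin approach you outline, using only $F(s)$ itself on the critical line, cannot reach the exponent $N^{3/4}f(N)^{1/6}$. After shifting to $\Re s=1/2$ you are left with (essentially) $\int_{\mathbb{R}} (1+|t|)^{\varepsilon}\,\bigl|\hat g(1/2+it)\bigr|\,dt$, where $\hat g(1/2+it)=\int_N^{N'}x^{-1/2}e\bigl(f(x)+\tfrac{t}{2\pi}\log x\bigr)\,dx$. By stationary phase (conditions iv) and v) pin down $\phi_t''(x_0)\asymp f(N)/N^2$ at the stationary point), one has $|\hat g(1/2+it)|\asymp N^{1/2}f(N)^{-1/2}$ on a $t$-range of length $\asymp f(N)$, and rapid decay outside; the $t$-integral therefore costs $\asymp N^{1/2}f(N)^{1/2}$. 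This is \emph{weaker} than $N^{3/4}f(N)^{1/6}$ precisely in the range $f(N)\ge N^{3/4+\eta}$ where the theorem operates. Invoking an exponent pair like $(1/6,2/3)$ does not help here, since $\hat g$ is an oscillatory \emph{integral}, not a sum, and in any case the loss comes from the length of the $t$-integration, not from the pointwise size of $\hat g$. Your heuristic ``$3/4=1/2+1/4$'' does not correspond to what your argument actually produces.

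A related symptom: your argument never invokes the character twists $F(s,\chi)$, yet condition c) is stated for \emph{all} $\chi\bmod q$ in both the $t$- and $q$-aspects. The paper's proof uses this in an essential way. The missing idea is a Farey dissection of the range of $h(x):=f'(x)+xf''(x)$ at a level $Q$ (ultimately $Q=N^{1/2}f(N)^{-1/3}$): on each short interval around $x_0=h^{-1}(l/q)$ one approximates $f(x)$ by $g(x)=\tfrac{l}{q}\,x - x_0^2f''(x_0)\log x + C$, so that $e(g(n))=e(C)\,e(nl/q)\,n^{-iT}$ with $T=2\pi x_0^2f''(x_0)$ fixed. The Dirichlet series $\sum_n a_n e(g(n))n^{-s}$ then decomposes via characters mod $q$ into values $F(s+iT,\chi)$, which are $\ll (q|T|)^{\varepsilon}$ by c); the slowly varying remainder $e(f-g)$ is removed by partial summation, and the leftover exponential \emph{integrals} are handled by the third-derivative bound (this is where the exponent $1/3$ enters, via $f(N)^{-1/3}$, ultimately producing the $f(N)^{1/6}$). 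In this scheme the $t$-integration in Perron contributes only a logarithm (thanks to the $ds/s$), because the large oscillation has been absorbed into the fixed shift $s\mapsto s+iT$. Without this localisation there is no mechanism to avoid paying the full factor $f(N)$ in the $t$-integral.
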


\begin{theorem} \label{expsumprimes}
Suppose the conditions {\rm i)-vii)} on $f(x)$ and {\rm a), b), c'), d)} on $F(s)$ in section {\rm \ref{assump}} are satisfied. Fix $\eta>0$. Suppose that $1\le N<N'\le 2N$ and
\eqref{anotherfcond1} holds. Then 
$$
\sum\limits_{N<p\le N'} a_p e(f(p))\ll_{f,\eta,\varepsilon} N^{3/4+\varepsilon}f(N)^{1/6}.
$$
\end{theorem}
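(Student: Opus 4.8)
The plan is to mirror the proof of Theorem~\ref{expsum}, but to build the restriction to primes into the Dirichlet series $\log F(s,\chi)$ rather than to produce it by a combinatorial (Vaughan) identity. Expand, for a Dirichlet character $\chi$ and $\Re s>1$, $\log F(s,\chi)=\sum_{n\ge1}c_n(\chi)n^{-s}$; by \eqref{logdev}--\eqref{coeffestimate} the coefficient $c_n(\chi)$ vanishes unless $n$ is a prime power, while $c_{p^k}(\chi)=b_{p^k}\chi(p)^k$, and in particular $c_p(\chi)=b_p\chi(p)=a_p\chi(p)$ for every prime $p$. By \eqref{coeffestimate} and condition~d) (every $\theta>0$ admissible), the proper prime powers contribute negligibly,
$$
\sum_{\substack{N<p^k\le N'\\ k\ge2}}\bigl|b_{p^k}\bigr|\ll_\varepsilon N^{\theta}\sum_{\substack{N<p^k\le N'\\ k\ge2}}1\ll_\varepsilon N^{1/2+\varepsilon},
$$
which is $\ll N^{3/4+\varepsilon}f(N)^{1/6}$ since $f(N)\ge N^{3/4+\eta}$. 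Hence it suffices to estimate $\sum_{N<n\le N'}c_n(\chi_0)e(f(n))$, the exponential sum attached to $\log F(s)=\log F(s,\chi_0)$, together with the twisted variants governed by $\log F(s,\chi)$ that appear at the stages of the argument requiring sums over arithmetic progressions. The only arithmetic information used about these coefficients is the size bound $c_n(\chi)\ll_\varepsilon n^\varepsilon$ (again \eqref{coeffestimate} with $\theta$ arbitrarily small) and their support on prime powers; where the proof of Theorem~\ref{expsum} makes structural use of the multiplicativity of the $a_n$ (for instance in a Dirichlet‑convolution splitting), the analogous step is lighter here precisely because the proper prime powers have already been stripped off.

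The analytic input is furnished by conditions~a), c'). By c') each $F(s,\chi)$ has no zeros with $\Re s>1/2$, so $\log F(s,\chi)$ continues holomorphically to $\Re s>1/2$ — single‑valued for $\chi\neq\chi_0$, and for $\chi=\chi_0$ with at worst a logarithmic branch point at $s=1$ coming from the order‑$m$ pole of $F$; the branch‑point contribution to $\sum_{N<n\le N'}c_n(\chi_0)e(f(n))$ is a smooth term of size $\ll_\varepsilon(\log N)^{O(1)}N/\sqrt{f(N)}$, bounded by the same stationary‑phase estimate that disposes of the polar term in Theorem~\ref{expsum} and absorbed since $f(N)\ge N^{3/4+\eta}$. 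Moreover the two‑sided bound \eqref{logbound} gives $\log\lvert F(s,\chi)\rvert\ll\log(q\lvert t\rvert)$ for $\sigma\ge1/2+\varepsilon$, $\lvert t\rvert\ge1$, and combining this with the bound $\arg F(s,\chi)\ll\log(q\lvert t\rvert)$ valid under the Riemann Hypothesis (Borel--Carath\'eodory) yields the Lindel\"of‑quality estimate
$$
\log F(s,\chi)\ll_\varepsilon(q\lvert t\rvert)^{\varepsilon}\qquad(\sigma\ge1/2+\varepsilon,\ \lvert t\rvert\ge1,\ \chi\bmod q),
$$
which is exactly the analogue, on the shifted line $\Re s=1/2+\varepsilon$, of the hypothesis~c) used in Theorem~\ref{expsum} on $\Re s=1/2$. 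One then runs the argument proving Theorem~\ref{expsum} with every contour moved to $\Re s=1/2+\varepsilon$ in place of $\Re s=1/2$; the loss from this shift is at most $N^{O(\varepsilon)}$, harmless in the range $N^{3/4+\eta}\le f(N)\le N^{3/2-\eta}$, and it is here that one uses the strengthening of c) to c') and of the Ramanujan exponent to condition~d).

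The main obstacle is to check that the machinery of Theorem~\ref{expsum} truly survives these substitutions with the required uniformities. Two points deserve care. First, the estimate $\log F(s,\chi)\ll_\varepsilon(q\lvert t\rvert)^{\varepsilon}$ must hold uniformly in the modulus $q$ wherever the proof of Theorem~\ref{expsum} appeals to its $q$‑aspect Lindel\"of hypothesis, which means running the Riemann‑Hypothesis bound for $\arg F(s,\chi)$ with explicit control in $q$ — this is why \eqref{logbound} carries the factor $q$. Second, $\log F(s,\chi)$ is holomorphic only for $\Re s>1/2$ and not for $\Re s\ge1/2$, so one must verify that no step of the proof of Theorem~\ref{expsum} genuinely requires the contour on the critical line rather than at $\Re s=1/2+\varepsilon$; the hypotheses iv)--vii) on $f$, which force the relevant stationary‑phase parameters to be fixed powers of $N$ and $f(N)$, are what make the resulting $N^{\varepsilon}$‑losses tolerable. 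Granting these, combining the estimate for $\sum_{N<n\le N'}c_n(\chi_0)e(f(n))$ with the $O_\varepsilon(N^{1/2+\varepsilon})$ from the proper prime powers and the $O((\log N)^{O(1)}N/\sqrt{f(N)})$ from the branch point at $s=1$ gives $\sum_{N<p\le N'}a_pe(f(p))\ll_{f,\eta,\varepsilon}N^{3/4+\varepsilon}f(N)^{1/6}$.
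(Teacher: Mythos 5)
Your proposal is correct and takes essentially the same route as the paper: the paper likewise encodes the prime restriction through $\log F(s,\chi)$ (writing $\log F(s,\chi)=M(s,\chi)+E(s,\chi)$ with $M(s,\chi)=\sum_p a_p\chi(p)p^{-s}$), uses condition d) to make the $k\ge 2$ prime-power terms harmless, deduces from c') (RH together with \eqref{logbound}) that $M(s,\chi)\ll\log(2qt)$ for $\sigma\ge 1/2+\varepsilon$, and then reruns the Theorem \ref{expsum} machinery with $\sigma_0=1/2+\varepsilon$, exactly as you describe. (Your extra worry about a branch point at $s=1$ never materializes in the actual argument, since the relevant Dirichlet series is evaluated at $s+iT$ with $|T|\asymp f(N)$ and $T_0\le |T|-1$, so the contour stays away from the point $s+iT=1$.)
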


From Theorem \ref{expsumprimes}, we shall deduce the following general result on the behaviour of the coefficients $a_p$ at Piatetski-Shapiro primes $p$. 

\begin{theorem} \label{piatetski} Suppose the conditions {\rm a), b), c'), d)} on $F(s)$ in section {\rm \ref{assump}} are satisfied. Let $1<c< 12/11$ be fixed. Then there exists $\delta>0$ such that
$$
\sum\limits_{\substack{n\le N\\ \left[n^c\right]\in \mathbb{P}}} a_{\left[n^c\right]} = \sum\limits_{p\le N^c} \left((p+1)^{1/c}-p^{1/c}\right)a_p + O\left(N^{1-\delta}\right)
\quad \mbox{as } N\rightarrow \infty.
$$
\end{theorem}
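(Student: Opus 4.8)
The plan is to reduce Theorem \ref{piatetski} to the prime exponential sum bound of Theorem \ref{expsumprimes} by the classical Piatetski-Shapiro device, handling the resulting fractional parts by a truncated Fourier expansion. Write $\psi(t)=t-[t]-\tfrac12$ and $X=N^c$. Since $[n^c]=p$ precisely when $n\in[p^{1/c},(p+1)^{1/c})$, and these half-open intervals (indexed by integers $\ge1$) tile $[1,\infty)$, counting the integers in each of them yields
\[
\sum_{\substack{n\le N\\ [n^c]\in\mathbb{P}}}a_{[n^c]}=\sum_{p\le N^c}a_p\bigl((p+1)^{1/c}-p^{1/c}\bigr)+E+O\bigl(N^{1/2+\varepsilon}\bigr),\qquad E:=\sum_{p\le N^c}a_p\bigl(\psi(p^{1/c})-\psi((p+1)^{1/c})\bigr),
\]
where the $O$-term absorbs the single prime near $N^c$ whose interval overshoots $N$ and the (at most $O(N^{1/2})$) primes $p$ for which $p^{1/c}$ or $(p+1)^{1/c}$ is an integer, where the oddness of $\psi$ breaks down. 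It then remains to prove $E\ll N^{1-\delta}$ for some fixed $\delta>0$.

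For a parameter $H\ge2$ I would invoke Vaaler's approximation: there is a trigonometric polynomial $\psi^*(t)=\sum_{1\le|h|\le H}c_he(ht)$ with $|c_h|\ll1/|h|$, together with a nonnegative majorant $|\psi(t)-\psi^*(t)|\le\sum_{|h|\le H}b_he(ht)$ with $b_h\ll1/H$. Applying this with $g(x)=x^{1/c}$ and with $g(x)=(x+1)^{1/c}$ reduces $E$ to a bounded combination of
\[
S_h(g):=\sum_{p\le X}a_p\,e\bigl(h\,g(p)\bigr),\qquad 0\le|h|\le H,
\]
namely $E\ll\sum_{1\le|h|\le H}|h|^{-1}\,|S_h(g)|+H^{-1}\sum_{0\le|h|\le H}|S_h(g)|$, in which the $h=0$ term contributes $\ll X^{1+\varepsilon}/H$ by the trivial bound $\sum_{p\le X}|a_p|\ll X^{1+\varepsilon}$ coming from the Ramanujan hypothesis (II).

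Now fix a sufficiently small $\eta>0$. For $1\le|h|\le H$ put $f(x)=f_h(x):=|h|\,g(x)$; one checks that $f_h$ satisfies conditions i)--vii) with implied constants uniform in $h$ (scaling by the positive constant $|h|$ and the harmless shift $x\mapsto x+1$ leave the ratios in iii)--vii) unchanged up to bounded factors, and $1/c\in(11/12,1)$ keeps the combinations in v)--vii) away from zero), so Theorem \ref{expsumprimes} applies with an $h$-uniform constant. Splitting $p\le X$ into dyadic ranges $p\sim M$, Theorem \ref{expsumprimes} is available once $M^{3/4+\eta}\le|h|M^{1/c}\le M^{3/2-\eta}$: the left inequality holds for every $M\ge1$ since $1/c>3/4+\eta$, and the right one holds for $M\ge M_0(h):=|h|^{1/(3/2-\eta-1/c)}$. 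Using Theorem \ref{expsumprimes} for $M\ge M_0(h)$ and the trivial bound $\ll M^{1+\varepsilon}$ for $M<M_0(h)$, and noting that $3/4+\tfrac1{6c}<1$ so that each dyadic sum is dominated by its top range, one obtains $|S_h(g)|\ll X^{3/4+\frac1{6c}+\varepsilon}|h|^{1/6}+M_0(h)^{1+\varepsilon}\ll X^{3/4+\frac1{6c}+\varepsilon}|h|^{1/6}$, since $M_0(H)$ is only a small power of $X$. Summing over $h$ as above then gives
\[
E\ll X^{3/4+\frac1{6c}+\varepsilon}H^{1/6}+\frac{X^{1+\varepsilon}}{H}.
\]
Choosing $H=X^{(3/2-1/c)/7}$ balances the two terms at $X^{\,11/14+\frac1{7c}+\varepsilon}$, and the elementary equivalence $\tfrac{11}{14}+\tfrac1{7c}<\tfrac1c\iff c<\tfrac{12}{11}$ then yields $E\ll X^{\,1/c-\delta_0}=N^{\,1-c\delta_0}$ for some $\delta_0>0$; this proves the theorem with, e.g., $\delta=\tfrac{12-11c}{28}$.

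The essential analytic input — genuine cancellation in $\sum_{p\sim M}a_p e(hx^{1/c})$ — is exactly Theorem \ref{expsumprimes}, which I may assume. Within the present deduction the only point that needs real care is the exponent bookkeeping of the last two steps: making sure every dyadic sum is governed by its largest range, that the ``trivial'' ranges $M<M_0(h)$ and the $h=0$ term remain strictly below $X^{1/c}=N$, and that the optimal choice of $H$ pushes the balanced bound $X^{11/14+1/(7c)}$ below $X^{1/c}$ precisely under the hypothesis $c<12/11$. Everything else — Vaaler's lemma, the verification of i)--vii) for $f_h$, and the harmless distinction between $x^{1/c}$ and $(x+1)^{1/c}$ — is routine.
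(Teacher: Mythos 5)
Your overall strategy coincides with the paper's: expand the counting condition through the sawtooth function, truncate its Fourier expansion, and feed the resulting phases $f_h(x)=|h|x^{1/c}$ (dyadically, uniformly in $h$) into Theorem \ref{expsumprimes}. Your exponent bookkeeping is also correct: the dyadic bound $|S_h(g)|\ll X^{3/4+\frac{1}{6c}+\varepsilon}|h|^{1/6}$, the optimization $H=X^{3/14-1/(7c)}$, and the equivalence $\frac{11}{14}+\frac{1}{7c}<\frac1c\iff c<\frac{12}{11}$ all check out, and your free-parameter optimization of $H$ is a perfectly good variant of the paper's fixed truncation at level $P^{1-\gamma+\eta}$ combined with the differencing gain; both land on the same threshold. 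However, there is one genuine gap: the claimed reduction $E\ll\sum_{1\le|h|\le H}|h|^{-1}|S_h(g)|+H^{-1}\sum_{0\le|h|\le H}|S_h(g)|$ does not follow from Vaaler's lemma when the coefficients $a_p$ are complex or sign-changing (e.g.\ $a_p=\lambda(p)$ or $\lambda(p^2)$). Vaaler controls the error only through a pointwise inequality $|\psi(t)-\psi^*(t)|\le B(t)$ with a nonnegative trigonometric polynomial $B(t)=\sum_{|h|\le H}b_he(ht)$, $b_h\ll 1/H$; multiplying by $a_p$ and summing therefore yields only $\sum_{p\le X}|a_p|\,B(g(p))$, i.e.\ exponential sums with the coefficients $|a_p|$, which are not of the form $S_h(g)$ and are not covered by Theorem \ref{expsumprimes} (whose hypotheses concern $F$, not the Dirichlet series with coefficients $|a_n|$). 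Your $h=0$ estimate survives, but the $h\ne0$ part of the majorant contribution is unjustified as written, and the trivial bound $\ll X^{1+\varepsilon}$ per frequency is far too large to rescue it.

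The standard repair — and this is precisely what the paper's appeal to the Fourier analysis of \cite{baierzhao} provides — is to bound $|a_p|\ll p^{\varepsilon}$ by the Ramanujan hypothesis, drop the primality condition using $B\ge0$, and then estimate the resulting integer sums $\sum_{n\le X}e(hn^{1/c})$ unconditionally by van der Corput's second-derivative test, giving $\ll (|h|X^{1/c})^{1/2}+X^{1-1/(2c)}$; since your $H$ is a small power of $X$, these extra terms are comfortably $\ll X^{1/c-\delta}$ and the final range $c<12/11$ is unchanged. So the architecture and the numerology of your argument are right, but you must add this positivity-plus-van-der-Corput ingredient (or some equivalent unconditional treatment of the truncation error); without it the displayed bound for $E$ is not a consequence of the tools you invoke.
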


Let now $G$ be a Hecke eigenform of weight $\kappa$ for the full modular group SL$_2(\mathbb{Z})$. By $\lambda(n)$ we denote the normalized $n$-th Fourier coefficient of 
$G$, {\it i.e.}
$$
G(z)=\sum\limits_{n=1}^{\infty} \lambda(n)n^{(\kappa-1)/2}e(nz) \quad \mbox{for } \Im z>0, \mbox{ and } \lambda(1)=1.  
$$
Let $L(G,s)$ be the $L$-function for $G$, defined by
$$
L(G,s):= \sum\limits_{n=1}^{\infty} \lambda\left(n\right)n^{-s} \quad \mbox{for } \Re s>1.
$$
We note that $L(G,s)$ can be written in the form
\begin{equation} \label{infprod}
L(G,s):=\prod\limits_p \left(1-\alpha_p p^{-s}\right)^{-1}\left(1-\overline{\alpha_p}p^{-s}\right)^{-1} \quad \mbox{for } \Re s>1,
\end{equation}
where $\alpha_p$ are complex numbers with $\left|\alpha_p \right|=1$ (see \cite{Iwaniec}, section 13.8., for example).
More generally, for any Dirichlet character $\chi$ let $L(G\otimes \chi,s)$ be the $L$-function for $G$ twisted with $\chi$, defined by
$$
L(G \otimes \chi,s)=\sum\limits_{n=1}^{\infty} \chi(n) \lambda\left(n\right)n^{-s} \quad \mbox{for } \Re s>1.
$$
Furthermore, let $L(\mbox{Sym}^2\ G,s)$ be the symmetric square $L$-function for $G$, defined by
$$
L(\mbox{Sym}^2\ G,s)=
\zeta\left(2s\right)\sum\limits_{n=1}^{\infty} \lambda\left(n^2\right)n^{-s}  \quad \mbox{for } \Re s>1.
$$
We note that (see \cite{Iwaniec}, section 13.8.)
\begin{equation} \label{infprod1}
L\left(\mbox{Sym}^2\ G,s\right)=\prod\limits_p \left(1-\alpha_p^2p^{-s}\right)^{-1}\left(1-p^{-s}\right)^{-1}\left(1-\overline{\alpha_p}^{2}p^{-s}\right)^{-1}  \quad \mbox{for } \Re s>1,
\end{equation}
where the complex numbers $\alpha_p$ are as given in \eqref{infprod}.
More generally, let $L(\mbox{Sym}^2\ G\otimes \chi,s)$ be the symmetric square $L$-function for $G$ twisted with $\chi$, defined by
$$
L\left(\mbox{Sym}^2\ G\otimes \chi,s\right)=
L\left(2s,\chi^2\right)\sum\limits_{n=1}^{\infty} \chi(n) \lambda\left(n^2\right)n^{-s} \quad \mbox{for } \Re s>1.
$$
We note that if $F(s)=L(\mbox{Sym}^2\ G,s)$, then $F(s,\chi)=L(\mbox{Sym}^2\ G \otimes \chi,s)$, in the sense of \eqref{Fschi}. \\ \\
{\bf Remark 1:} It is well-known that $F(s)=L(G,s)$ satisfies the conditions a) and b) in section 2.  The same is true for $F(s)=L(\mbox{Sym}^2\ G,s)$ by work of Shimura \cite{shim}. Moreover, if $\chi$ is a primitive character, then $L(G\otimes \chi,s)$ and $L(\mbox{Sym}^2\ G\otimes \chi,s)$ are $L$-functions in the sense of section 5.1. in \cite{IwKo}, and the analytic conductors of them depend polynomially on $s$ and the conductor of $\chi$. Under the Riemann Hypothesis for these $L$-functions, it therefore follows from Theorem 5.19. in \cite{IwKo} that they satisfy the bound \eqref{logbound}. Furthermore, it can be easily deduced from \eqref{infprod}, \eqref{infprod1} and $|\alpha_p|=1$ that $L(G,s)$ and $L(\mbox{Sym}^2\ G,s)$ satisfy condition d) in section 2.
Finally, it is a consequence of Theorem 1 in \cite{ConGosh} that the Riemann Hypothesis for all $L$-functions $L(G\otimes \chi,s)$ and $L(\mbox{Sym}^2\ G\otimes \chi,s)$ implies that they satisfy the  Lindel\"of Hypothesis in the $t$- and $q$-aspects, i.e.
\begin{equation} \label{lindG}
L\left(G \otimes \chi,\frac{1}{2}+it\right) \ll |qt|^{\varepsilon} \quad \mbox{for all $|t|\ge 1$, $q\in \mathbb{N}$ and characters $\chi \bmod{q}$}
\end{equation}
resp.
\begin{equation} \label{lindG1}
L\left(\mbox{Sym}^2\ G \otimes \chi,\frac{1}{2}+it\right) \ll |qt|^{\varepsilon} \quad \mbox{for all $|t|\ge 1$, $q\in \mathbb{N}$ and characters $\chi \bmod{q}$}.
\end{equation}

The following result on exponential sums with Hecke eigenvalues and squares of Hecke eigenvalues over primes follows from Theorem 2, Remark 1 and the fact that the $p$-th Dirichlet coefficient $a_p$ of $L(\mbox{Sym}^2 G,s)$ coincides with $\lambda(p^2)$.

\begin{theorem} \label{heckeprimes} Suppose the conditions {\rm i)-vii)} on $f(x)$ in section {\rm \ref{assump}} are satisfied. Then we have the following.\medskip\\
{\rm (i)} If the Riemann Hypothesis holds for all $L$-functions $L(G \otimes \chi,s)$, then
$$
\sum\limits_{N<p\le N'} \lambda(p) e(f(p))\ll_{f,\eta,\varepsilon} N^{3/4+\varepsilon}f(N)^{1/6}.
$$
{\rm (ii)} If the Riemann Hypothesis holds for all $L$-functions $L\left(\mbox{Sym}^2\ G\otimes \chi,s\right)$, then
$$
\sum\limits_{N<p\le N'} \lambda\left(p^2\right) e(f(p))\ll_{f,\eta,\varepsilon} N^{3/4+\varepsilon}f(N)^{1/6}.
$$
\end{theorem}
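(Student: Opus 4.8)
The plan is to obtain both statements as direct corollaries of Theorem \ref{expsumprimes}, so essentially all of the work lies in checking that the two relevant $L$-functions fit into the abstract setup of section \ref{assump}. For part (i) I would take $F(s)=L(G,s)=\sum_{n\ge1}\lambda(n)n^{-s}$, so that the coefficient at a prime is literally $a_p=\lambda(p)$. For part (ii) I would take $F(s)=L(\mathrm{Sym}^2\,G,s)=\zeta(2s)\sum_{m\ge1}\lambda(m^2)m^{-s}$; here the first step is the elementary bookkeeping that, on multiplying out the two Dirichlet series, $a_n=\sum_{d^2m=n}\lambda(m^2)$, and since a prime $p$ admits only the representation $d=1$, $m=p$, this gives $a_p=\lambda(p^2)$. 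In both cases the twist $F(s,\chi)$ of \eqref{Fschi} is $L(G\otimes\chi,s)$, resp.\ $L(\mathrm{Sym}^2\,G\otimes\chi,s)$, as already noted before the theorem.

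Next I would verify that this $F$ satisfies conditions a), b), c'), d) of section \ref{assump}. Conditions a) and b) are classical for $L(G,s)$ and, through Shimura's work \cite{shim}, hold for $L(\mathrm{Sym}^2\,G,s)$ as well; condition d) follows from $|\alpha_p|=1$ together with the Euler products \eqref{infprod} and \eqref{infprod1}. The only condition that invokes the hypothesis of the theorem is c'): its Riemann Hypothesis clause is precisely the assumed RH for the family $L(G\otimes\chi,s)$ (resp.\ $L(\mathrm{Sym}^2\,G\otimes\chi,s)$), and the two-sided growth bound \eqref{logbound} is extracted, exactly as in Remark 1, from Theorem 5.19 of \cite{IwKo}: for a primitive character $\chi$ these twists are $L$-functions in the sense of \S5.1 of \cite{IwKo} with analytic conductor polynomial in $s$ and in $\mathrm{cond}(\chi)$, so under RH one gets upper and lower bounds $D(q|t|)^B$ and $C(q|t|)^A$ with $A,B$ uniform in $q$.

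The one genuine technical point --- what I expect to be the main obstacle beyond merely quoting Remark 1 --- is that section \ref{assump} requires b) and c') for $F(s,\chi)$ for \emph{every} Dirichlet character $\chi$, whereas Remark 1 and Theorem 5.19 of \cite{IwKo} are phrased for primitive $\chi$. I would dispose of this by writing an arbitrary $\chi\bmod q$ as induced from a primitive $\chi^\ast\bmod q^\ast$ and observing that $F(s,\chi)$ and $F(s,\chi^\ast)$ differ by a finite product, over the primes $p\mid q$, of local Euler factors (polynomials in $p^{-s}$ built from $\alpha_p$, together with the $L(2s,\chi^2)$-factor in the symmetric square case). By $|\alpha_p|=1$ each such factor is entire and, for $\sigma\ge1/2$, bounded above and below by quantities of size $q^{\pm\varepsilon}$; hence entirety, the location of the zeros, and the polynomial-in-$q$ size bounds all transfer from $F(s,\chi^\ast)$ to $F(s,\chi)$, only perturbing the exponents $A,B$ in \eqref{logbound} by $\varepsilon$. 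With all hypotheses of Theorem \ref{expsumprimes} in hand, applying it to each of the two choices of $F$ --- for $N,N'$ and $f$ subject to the same restrictions $1\le N<N'\le2N$ and \eqref{anotherfcond1} as in that theorem --- yields the claimed estimate $\ll_{f,\eta,\varepsilon}N^{3/4+\varepsilon}f(N)^{1/6}$ at once.
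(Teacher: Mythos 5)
Your proposal is correct and takes essentially the same route as the paper: the author deduces this theorem directly from Theorem \ref{expsumprimes} together with Remark 1 and the identity $a_p=\lambda(p^2)$ for $F(s)=L(\mbox{Sym}^2\ G,s)$, exactly as you do. Your additional discussion of reducing an imprimitive twist to its primitive inducing character supplies a detail the paper leaves implicit in Remark 1, but it does not change the argument.
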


Finally, we shall derive the following new result on squares of Hecke eigenvalues at Piatetski-Shapiro primes from Theorem \ref{piatetski}.

\begin{theorem} \label{heckesquares} Let $1<c< 12/11$ be fixed. Assume that the Riemann Hypothesis holds for all $L$-functions
$L\left(\mbox{Sym}^2\ G \otimes \chi,s\right)$. Then we have
$$
\sum\limits_{\substack{n\le N\\ \left[n^c\right]\in \mathbb{P}}} \lambda\left(\left[n^c\right]\right)^2\sim \frac{N}{c\log N} \quad
\mbox{as } N\rightarrow \infty.
$$ 
\end{theorem}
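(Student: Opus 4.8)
The plan is to apply Theorem~\ref{piatetski} to $F(s)=L(\mbox{Sym}^2\ G,s)$, to use the Hecke relation $\lambda(p)^2=\lambda(p^2)+1$, and to invoke the classical Piatetski-Shapiro prime number theorem. By Remark~1 together with the Riemann Hypothesis for the $L$-functions $L(\mbox{Sym}^2\ G\otimes\chi,s)$, the function $F(s)=L(\mbox{Sym}^2\ G,s)$ satisfies conditions a), b), c'), d) of Section~\ref{assump}, and its $p$-th Dirichlet coefficient is $a_p=\lambda(p^2)$. For every prime $p=[n^c]$ we therefore have $\lambda([n^c])^2=a_{[n^c]}+1$, so
$$\sum_{\substack{n\le N\\ [n^c]\in\mathbb{P}}}\lambda([n^c])^2 \;=\; \sum_{\substack{n\le N\\ [n^c]\in\mathbb{P}}}a_{[n^c]} \;+\; \#\bigl\{n\le N:\ [n^c]\in\mathbb{P}\bigr\}.$$
Since $1<c<12/11$, the Piatetski-Shapiro prime number theorem gives $\#\{n\le N:[n^c]\in\mathbb{P}\}\sim N/(c\log N)$, which is exactly the claimed asymptotic; it thus remains to prove that $\sum_{n\le N,\,[n^c]\in\mathbb{P}}a_{[n^c]}=o(N/\log N)$, and I shall get the stronger bound $O(N^{1-c/2+\varepsilon})$.

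By Theorem~\ref{piatetski},
$$\sum_{\substack{n\le N\\ [n^c]\in\mathbb{P}}}a_{[n^c]} \;=\; \sum_{p\le N^c}\bigl((p+1)^{1/c}-p^{1/c}\bigr)a_p \;+\; O\bigl(N^{1-\delta}\bigr),$$
so everything reduces to the weighted prime sum on the right. Put $g(t):=(t+1)^{1/c}-t^{1/c}$; then $g$ is positive and decreasing with $g(t)\asymp t^{1/c-1}$ and $|g'(t)|\asymp t^{1/c-2}$, so by partial summation it suffices to bound $S(x):=\sum_{p\le x}a_p=\sum_{p\le x}\lambda(p^2)$. For this I would appeal to the explicit formula for $L(\mbox{Sym}^2\ G,s)$: this $L$-function is entire (Shimura) and, under RH, nonvanishing at $s=1$, so $-F'/F$ has no pole at $s=1$ and the summatory function $\sum_{n\le x}\Lambda_F(n)$ of its Dirichlet coefficients has no main term; condition c') provides the polynomial growth bound \eqref{logbound}, which, combined with RH, controls $-F'/F$ on the line $\Re s=1/2+\varepsilon$ and lets one shift the Perron contour there to obtain $\sum_{n\le x}\Lambda_F(n)\ll x^{1/2+\varepsilon}$. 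Removing the prime-power terms with $k\ge 2$ (of size $O(x^{1/2+\varepsilon})$ by condition d)) and then stripping the factor $\log p$ by partial summation yields $S(x)\ll x^{1/2+\varepsilon}$. Inserting this into the partial summation for $\sum_{p\le N^c}g(p)a_p$, both the endpoint term and the integral term are $\ll (N^c)^{1/c-1}(N^c)^{1/2+\varepsilon}=N^{1-c/2+c\varepsilon}$, which, since $c>1$, is $O(N^{1/2+\varepsilon'})=o(N/\log N)$.

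Combining the two displays gives $\sum_{n\le N,\,[n^c]\in\mathbb{P}}\lambda([n^c])^2=N/(c\log N)+o(N/\log N)$, which is the assertion. The step I expect to be the main obstacle is the power-saving prime number theorem $S(x)\ll x^{1/2+\varepsilon}$ for the coefficients $\lambda(p^2)$: one must check carefully that $L(\mbox{Sym}^2\ G,s)$ contributes no main term (entirety together with nonvanishing at $s=1$) and that the bound \eqref{logbound} really is strong enough to estimate $-F'/F$ along the shifted contour; the remaining ingredients — the Hecke identity, the two partial summations, and the Piatetski-Shapiro prime number theorem — are standard.
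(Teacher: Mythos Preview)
Your proposal is correct and follows essentially the same route as the paper: apply Theorem~\ref{piatetski} to $F(s)=L(\mbox{Sym}^2\,G,s)$, use the Hecke identity $\lambda(p)^2=1+\lambda(p^2)$, bound the weighted prime sum via $\sum_{p\le x}\lambda(p^2)\ll x^{1/2+\varepsilon}$ (under RH) together with partial summation, and conclude with the Piatetski--Shapiro prime number theorem. The only cosmetic difference is that you invoke the Hecke relation before applying Theorem~\ref{piatetski} whereas the paper does it afterwards, and you spell out in more detail how the RH-conditional prime number theorem for $L(\mbox{Sym}^2\,G,s)$ is obtained; the paper simply states this bound.
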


In \cite{Bai}, we proved Theorem \ref{heckesquares} for $c$ in the smaller range $1<c<25/24$ under the Lindel\"of Hypothesis \eqref{lindG1}.

\section{Approximation of the amplitude function in short intervals} \label{Approxsection}
Our goal is to establish non-trivial bounds for exponential sums of the form
$$
\sum\limits_{n\sim N} c_n e(f(n)),
$$
where $c_n=a_n$ in the case of Theorem 1, and $c_n=a_n$ if $n$ is prime and $c_n=0$ otherwise in the case of Theorem 2. 
To this end, we split this exponential sum into short subsums in the same way as in \cite{Bai}. To keep this paper self-contained, we copy this treatment from \cite{Bai}.  

For $x\ge 1$, let 
\begin{equation} \label{hdef}
h(x):=f'(x)+xf''(x).
\end{equation}
By the condition vi) on $f$ in section 1, we have 
\begin{equation} \label{hprime}
h'(x)=2f''(x)+xf'''(x)\asymp \frac{f(x)}{x^2}.
\end{equation}
Since $f$ is assumed to take only positive values and $h'$ is continuous, it follows that $h'(x)$ doesn't change sign and hence, $h(x)$ is monotonically increasing or decreasing. In the sequel, we assume without loss of generality that $h(x)$ is 
monotonically decreasing. Let $Q$ be a real parameter satisfying
\begin{equation} \label{Qcond}
\frac{N^{1+\eta}}{f(N)}\le Q\le N,
\end{equation}
to be chosen later.  We make a Farey dissection of level $Q$ of the interval $[h(N'),h(N))$, i.e., we write $[h(N'),h(N))$ as the disjoint union of intervals of the form 
$$
\left[\left.\frac{l}{q}-\frac{M_1}{qQ},\frac{l}{q}+\frac{M_2}{qQ}\right)\right. \cap [h(N'),h(N)),
$$
where $M_1,M_2\asymp 1$, $q\le Q$ and $(q,l)=1$. Projecting these intervals back into $(N,N']$ under the map $h^{-1}$, we get intervals of the form
$$
h^{-1}\left(\left.\left[\frac{l}{q}-\frac{M_1}{qQ},\frac{l}{q}+\frac{M_2}{qQ}\right.\right) \cap [h(N'),h(N))\right)=
(x_0-m_1,x_0+m_2] \subseteq (N,N']
$$
with 
$$
x_0=h^{-1}\left(\frac{l}{q}\right)
$$
and
\begin{equation} \label{m1m2sizes}
m_1,m_2 \ll \frac{1}{qQ} \cdot \left(h^{-1}\right)'\left(\frac{l}{q}\right)=\frac{1}{qQ}\cdot \frac{1}{h'(x_0)} \asymp 
\frac{N^2}{qQf(N)},
\end{equation}
by \eqref{hprime} and the conditions ii) and iii) on  $f$ in section 1. In the following, we write 
$$
I(x_0):=(x_0-m_1,x_0+m_2].
$$

Now our aim is to produce a non-trivial estimate for the subsum over the interval $I(x_0)$. To this end, as in \cite{Bai}, we approximate the amplitude function $f(x)$ by the function
\begin{equation} \label{gdef}
g(x)=h(x_0)x-x_0^2f''(x_0)\log x+C=\frac{l}{q}\cdot x-x_0^2f''(x_0)\log x+C,
\end{equation}
where 
$$
C:=f(x_0)-h(x_0)x_0+x_0^2f''(x_0)\log x_0.
$$
The idea behind this approximation is that $e(g(x))$ can be written as a constant times an additive character mod $q$ times a complex power of $n$, which shall turn out to be useful.
Moreover, by the definition of $h(x)$ in \eqref{hdef}, we have
$$
g(x_0)=f(x_0),\quad g'(x_0)=f'(x_0), \quad \mbox{and} \quad
g''(x_0)=f''(x_0),
$$
which ensures a good approximation of $f(x)$ by $g(x)$ near $x_0$.
More precisely, using Taylor's theorem, we deduce that 
\begin{equation} \label{approxi1}
\begin{split}
(f-g)'(x)=& \frac{1}{2}\cdot \left(x-x_0\right)^2 (f-g)'''(x_0)+O\left((x-x_0)^3(f-g)^{(4)}(c_1)\right),\\
(f-g)''(x)=& \left(x-x_0\right)(f-g)'''(x_0)+O\left((x-x_0)^2(f-g)^{(4)}(c_2)\right),\\
(f-g)'''(x)= & (f-g)'''(x_0)+O\left((x-x_0)(f-g)^{(4)}(c_3)\right)
\end{split}
\end{equation}
for suitable $c_1,c_2,c_3\in I(x_0)$ depending on $x$. We further note that, by the conditions ii), iii), iv) and vii) on $f$, we have
\begin{equation} \label{third}
(f-g)'''(x_0)=f'''(x_0)-\frac{2f''(x_0)}{x_0}\asymp \frac{f(N)}{N^3}
\end{equation}
and
\begin{equation}  \label{fourth}
(f-g)^{(4)}(c)=f^{(4)}(c)+\frac{6x_0^2f''(x_0)}{c^4} \ll \frac{f(N)}{N^4} \quad \mbox{if } c\in I(x_0). 
\end{equation}
From \eqref{Qcond}, \eqref{m1m2sizes}, \eqref{approxi1}, \eqref{third} and \eqref{fourth}, it follows that
\begin{equation} \label{approxi2}
\begin{split}
(f-g)'(x) \ll & \frac{N}{(qQ)^2f(N)}\\
(f-g)''(x) \ll & \frac{1}{qQN}\\
(f-g)'''(x)\asymp & \frac{f(N)}{N^3} 
\end{split}
\end{equation}
if $x\in I(x_0)$. 

In the following, we write
\begin{equation}
\sum\limits_{n\in I(x_0)} c_n e(f(n)) = \sum\limits_{n\in I(x_0)} c_n e(g(n)) e(f(n)-g(n)). 
\end{equation}
In \cite{Bai}, we removed the slowly oscillating term $e(f(n)-g(n))$ using partial summation and then treated the sums
$$
\sum\limits_{n} c_n e(g(n))
$$ 
using Perron's formula. In this paper, we combine these two processes, which leads to sharper estimates.

\section{Applications of Perron's formula and partial summation}
We begin with a general treatment of expressions of the form
$$
\sum\limits_{x_1<n\le x_2} C_nZ(n),
$$
where $(C_n)$ is a sequence of complex numbers satisfying $C_n\ll n^{\varepsilon}$ for any given $\varepsilon>0$, $0<x_1<x_2$, and $Z:[x_1,x_2]\rightarrow \mathbb{C}$ is continuously differentiable. Later, we will focus on the special case when
\begin{equation} \label{choosing}
C_n=c_n e(g(n)), \quad x_1=x_0-m_1, \quad x_2=x_0+m_2, \quad Z(x)=e(f(x)-g(x)).
\end{equation}
Using partial summation, we have
\begin{equation} \label{partialsummation}
\sum\limits_{x_1<n\le x_2} C_nZ(n) = Z(x_2)\left(\sum\limits_{x_1<n\le x_2} C_n\right) - \int\limits_{x_1}^{x_2} \left(\sum\limits_{x_1<n\le u} C_n\right) Z'(u) \dif u. 
\end{equation}
From Perron's formula (see \cite{Bru}, Lemma 1.4.2., for example) and the condition $C_n\ll n^{\varepsilon}$, we have
\begin{equation} \label{perronsformula}
\sum\limits_{x_1<n\le u} C_n= \frac{1}{2\pi i} \int\limits_{1+\varepsilon-iT_0}^{1+\varepsilon+iT_0} \left(\sum\limits_{n=1}^{\infty} C_nn^{-s}\right)\left(u^s-x_1^s\right)\frac{\dif s}{s}+
O\left(\frac{u^{1+\varepsilon}}{T_0}+u^{\varepsilon}\right)
\end{equation}
for $T_0\ge 1$, to be fixed later. Plugging \eqref{perronsformula} into \eqref{partialsummation}, we get
\begin{equation} \label{start}
\begin{split}
\sum\limits_{x_1<n\le x_2} C_nZ(n) = \frac{1}{2\pi i} \int\limits_{1+\varepsilon-iT_0}^{1+\varepsilon+iT_0} H(s)\mathcal{I}(s)\frac{\dif s}{s}+
O\left(\left(\frac{x_2^{1+\varepsilon}}{T_0}+x_2^{\varepsilon}\right)\left(|Z(x_2)|+\int\limits_{x_1}^{x_2} |Z'(u)| \dif u\right)\right),
\end{split}
\end{equation}
where
\begin{equation} \label{settings}
H(s):=\sum\limits_{n=1}^{\infty} C_nn^{-s} \quad \mbox{and} \quad \mathcal{I}(s):=Z(x_2)(x_2^s-x_1^s)-\int\limits_{x_1}^{x_2} Z'(u)\left(u^s-x_1^s\right) \dif u.
\end{equation}

Now we assume that $H(s)$ extends to a holomorphic function on 
$$
\left\{ s\in \mathbb{C} \ :\ \Re s \ge \sigma_0, \ \ |\Im s|\le T_0 \right\}
$$
for some $\sigma_0<1$ which we will fix later. Then using Cauchy's theorem, it follows that
\begin{equation} \label{aftercauchy}
\begin{split}
\sum\limits_{x_1<n\le x_2} C_nZ(n) 
= & \frac{1}{2\pi i} \left(\int\limits_{1+\varepsilon-iT_0}^{\sigma_0-iT_0}+\int\limits_{\sigma_0-iT_0}^{\sigma_0+iT_0}+
\int\limits_{\sigma_0+iT_0}^{1+\varepsilon+iT_0}\right)  H(s)\mathcal{I}(s)\frac{\dif s}{s} \\ & +
O\left(\left(\frac{x_2^{1+\varepsilon}}{T_0}+x_2^{\varepsilon}\right)\left(|Z(x_2)|+\int\limits_{x_1}^{x_2} |Z'(u)| \dif u\right)\right).
\end{split}
\end{equation}

Next, we bound $H(s)$, $\mathcal{I}(s)$ and the $O$-term in the situation when the para\-meters and functions are given by \eqref{choosing}. We have 
$$
Z'(x)=2\pi i(f-g)'(x)\cdot e(f(x)-g(x)).
$$
By \eqref{m1m2sizes} and \eqref{approxi2}, we have
\begin{equation}
\int\limits_{x_1}^{x_2} |Z'(u)|\dif u \ll \frac{N^3}{q^3Q^3f(N)^2}.
\end{equation}
This together with \eqref{choosing} and $N\le x_1<x_2\le N'$ implies that the $O$-term is bounded by
\begin{equation} \label{Otermbound}
\left(\frac{x_2^{1+\varepsilon}}{T_0}+x_2^{\varepsilon}\right)\left(|Z(x_2)|+\int\limits_{x_1}^{x_2} |Z'(u)| \dif u\right) \ll N^{\varepsilon}\left(1+\frac{N}{T_0}\right)\left(1+\frac{N^3}{q^3Q^3f(N)^2}\right).
\end{equation}

\section{Estimation of Dirichlet series}
By \eqref{choosing} and the definition of $g(x)$ in \eqref{gdef}, if $\Re s>1$, we have
\begin{equation} \label{Hs}
H(s)=\sum\limits_{n=1}^{\infty} C_nn^{-s} =  e(C) \sum\limits_{n=1}^{\infty} c_n e\left(n\cdot \frac{l}{q}\right)n^{-s-iT},
\end{equation}
where 
\begin{equation} \label{Tchoice}
T:=2\pi x_0^2 f''(x_0).
\end{equation}
We recall that $(l,q)=1$ and note that 
\begin{equation} \label{Tsize}
T\asymp f(N)
\end{equation}
by condition (iv) on $f$.

Next, we divide the sum over $n$ according to greatest common divisors with $q$, i.e. we write
\begin{equation*}
\sum\limits_{n=1}^{\infty} c_n e\left(n\cdot \frac{l}{q}\right)n^{-s-iT}= \sum\limits_{d|q} \sum\limits_{(n,q)=d} c_n e\left(n\cdot \frac{l}{q}\right)n^{-s-iT}  =
\sum\limits_{d|q} d^{-s-iT} \sum\limits_{(n,q/d)=1} c_{dn} e\left(n\cdot \frac{l}{q/d}\right) n^{-s-iT}.
\end{equation*}
Further, for $(nl,q/d)=1$, we rewrite the additive character above as a linear combination of multiplicative Dirichlet characters in the form
\begin{equation*}
e\left(n\cdot \frac{l}{q/d}\right)=\frac{1}{\varphi(q/d)}\cdot \sum\limits_{\chi \bmod{q/d}} \chi(l)\tau(\overline{\chi})\chi(n),
\end{equation*}
where $\tau(\overline{\chi})$ is the Gauss sum associated with $\overline{\chi}$. It follows that
\begin{equation} \label{rewritten}
\sum\limits_{n=1}^{\infty} c_n e\left(n\cdot \frac{l}{q}\right)n^{-s-iT}=\sum\limits_{d|q} d^{-s-iT}\cdot \frac{1}{\varphi(q/d)} \cdot \sum\limits_{\chi \bmod{q/d}} \chi(l)\tau(\overline{\chi}) \cdot
\sum\limits_{n=1}^{\infty} c_{dn}\chi(n)n^{-s-iT}.
\end{equation}
We now treat the following cases: 1) $c_n=a_n$ for all $n\in \mathbb{N}$ and 2) $c_n=a_n$ if $n$ prime and $c_n=0$ otherwise. Here $a_n$ denotes the Dirichlet coefficients of the function
$$
F(s)=\sum\limits_{n=1}^{\infty} a_n n^{-s},
$$ 
introduced in section 2.

{\it Case 1}: $c_n=a_n$ for all $n\in \mathbb{N}$, and  the conditions in Theorem \ref{expsum} are satisfied. This case has been treated completely in \cite{Bai}. In the following, we recall the results in \cite{Bai}. First, we have the factorization
\begin{equation} \label{factori}
\sum\limits_{n=1}^{\infty} c_{dn}\chi(n)n^{-s}=\sum\limits_{n=1}^{\infty} a_{dn}\chi(n)n^{-s} = G_{d}(s,\chi)F(s,\chi),
\end{equation}
where $F(s,\chi)$ is defined as in section 2 and 
$$
G_d(s,\chi)=\prod\limits_{p|d} \frac{\sum\limits_{k=0}^{\infty} 
a_{p^{\alpha(p)+k}}\chi^k(p)p^{-ks}}{\sum\limits_{k=0}^{\infty} 
a_{p^k}\chi^k(p)p^{-ks}}
$$
if 
$$
d=\prod\limits_{p|d} p^{\alpha(p)}
$$
is the prime number factorization of $d$. Both $F(s,\chi)$ and $G_d(s,\chi)$ extend to holomorphic functions on 
$$
\{ s \ :\ \Re s\ge 1/2, \ \Im s\ge 1\}.
$$
Moreover,
\begin{equation} \label{GdF}
G_d(s,\chi)\ll d^{\varepsilon} \quad \mbox{and} \quad F\left(s,\chi\right) \ll |qt|^{\varepsilon} \quad \mbox{uniformly for } \sigma \ge 1/2 \mbox{ and } t \ge 1.
\end{equation}
From \eqref{Hs}, \eqref{rewritten}, \eqref{factori}, \eqref{GdF} and $|\tau(\chi)|\le \sqrt{q}$, we deduce that $H(s)$ extends to a holomorphic function on 
\begin{equation}
\mathcal{M}:=\{ s \ :\ \Re s\ge 1/2,\ |\Im s|\le T_0\}
\end{equation}
if 
\begin{equation} \label{T0cond}
0<T_0\le |T|-1, 
\end{equation}
and, under the same condition \eqref{T0cond}, we have
\begin{equation} \label{Hsbound1}
H(s)\ll q^{1/2+\varepsilon}|T|^{\varepsilon} \quad \mbox{uniformly in } \mathcal{M}.
\end{equation}
We note that by \eqref{Tsize}, \eqref{T0cond} holds if 
\begin{equation} \label{T0condi}
0<T_0\le f(N)N^{-\eta}
\end{equation}
for any fixed $\eta>0$ and $N$ large enough. 

{\it Case 2}: $c_n=a_n$ if $n$ prime and $c_n=0$ otherwise, and the conditions in Theorem \ref{expsumprimes} are satisfied. Then we have 
\begin{equation} \label{umrechnung}
\sum\limits_{n=1}^{\infty} c_{dn}\chi(n)n^{-s}=\begin{cases}  M(s,\chi)  & \mbox{if } d=1\\ a_d & \mbox{if } d \mbox{ is a prime}\\ 0 & \mbox{otherwise,} \end{cases}
\end{equation}
where
$$
M(s,\chi):=\sum\limits_{p} a_{p}\chi(p)p^{-s}.
$$

In the following, we focus on the first case when $d=1$, in which we approximate the series in question as follows. Using \eqref{Euler}, \eqref{logdev} and the complete multiplicativity of Dirichlet characters, we find
$$
F(s,\chi)=\sum\limits_{n=1}^{\infty} a_n\chi(n)n^{-s}=\prod\limits_p F_p(s,\chi) \quad \mbox{if } \Re s>1, 
$$
where 
$$
\log F_p(s,\chi)=\sum\limits_{k=1}^{\infty} b_{p^k}\chi^k(p)p^{-ks}.
$$
It follows that
$$
\log F(s,\chi)= \sum\limits_{p} \sum\limits_{k=1}^{\infty} b_{p^k}\chi^k(p)p^{-ks}.
$$
Moreover, exponentiating, using the Taylor series expansion for the exponential function and comparing coefficients shows that $b_p=a_p$ for all primes $p$. Hence, if $\Re s>1$, we may write
\begin{equation} \label{zerfall}
\log F(s,\chi)=M(s,\chi)+E(s,\chi), 
\end{equation}
where
$$
E(s,\chi):=\sum\limits_{p}\sum\limits_{k=2}^{\infty} b_{p^k}\chi^k(p)p^{-ks}.
$$

From condition d) on $F(s)$ in section {\rm \ref{assump}}, we deduce that $E(s,\chi)$
extends to a holomorphic function on the half plane $\Re s>1/2$, and
\begin{equation*} \label{errorest}
E\left(s,\chi\right)\ll \sum\limits_{p}\sum\limits_{k=2}^{\infty} b_{p^k}p^{-k\sigma} \ll \sum\limits_{p}\sum\limits_{k=2}^{\infty} p^{-k(\sigma-\varepsilon/2)}\ll_{\varepsilon} 1 \quad \mbox{if } \sigma\ge 1/2+\varepsilon.
\end{equation*}
This together with \eqref{zerfall} and conditions b) and c') on $F(s)$ implies that $M(s,\chi)$ extends to a holomorphic function on 
$$
\{s\ : \ \Re s> 1/2, \ \Im s\ge 1\}
$$
which satisfies 
\begin{equation} \label{Mesti}
M(s,\chi)\ll_{\varepsilon} \log(2qt)  \quad \mbox{uniformly for } \sigma \ge 1/2+\varepsilon \mbox{ and } t \ge 1.
\end{equation}

From \eqref{Hs}, \eqref{rewritten}, \eqref{umrechnung}, \eqref{Mesti}, $|\tau(\overline{\chi})|\le \sqrt{q}$ and $a_d\ll d^{\varepsilon}$, by the Ramanujan Hypothesis , we deduce that for every $\varepsilon>0$ and under the condition \eqref{T0condi}, $H(s)$ extends to a holomorphic function on 
\begin{equation}
\mathcal{M}_{\varepsilon}:=\{ s \ :\ \Re s\ge 1/2+\varepsilon,\ |\Im s|\le T_0\},
\end{equation}
and we have
\begin{equation} \label{Hsbound2}
H(s)\ll q^{1/2+\varepsilon}|T|^{\varepsilon} \quad \mbox{uniformly in } \mathcal{M}_{\varepsilon}.
\end{equation}

\section{Exponential integral estimates}
In this section we estimate the term $\mathcal{I}(s)$ introduced in \eqref{settings}. We write
\begin{equation} \label{intsplit}
\mathcal{I}(s)=\mathcal{I}_0(s)+\mathcal{I}_1(s)-\mathcal{I}_2(s),
\end{equation}
where 
\begin{equation} \label{splitting}
\begin{split}
\mathcal{I}_0(s):= & e(f(x_2)-g(x_2))\cdot (x_2^s-x_1^s),\\
\mathcal{I}_1(s):= & \int\limits_{x_1}^{x_2} 2\pi i (f'(u)-g'(u))u^{\sigma} \cdot e\left(f(u)-g(u)+\frac{t}{2\pi}\cdot \log u\right) \dif u,\\
\mathcal{I}_2(s):= & x_1^{s} \int\limits_{x_1}^{x_2} 2\pi i (f'(u)-g'(u)) \cdot e\left(f(u)-g(u)\right)\dif u.
\end{split}
\end{equation}
The first term is trivially bounded by
\begin{equation} \label{I0}
\mathcal{I}_0(s) \ll N^{\sigma}.
\end{equation}
For the estimation of $\mathcal{I}_1(s)$ and $\mathcal{I}_2(s)$, we need the following standard bound for exponential integrals.

\begin{lemma} \label{intbound}  Suppose that for some $k\ge 1$ and $\Lambda>0$ we have
$$
\left|f^{(k)}(x)\right|\ge \Lambda
$$
for any $x\in [a,b]$. Then
$$
\int\limits_a^b e(f(x))\dif x \ll_k \Lambda^{-1/k}.
$$
\end{lemma}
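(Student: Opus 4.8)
The plan is to prove the lemma by induction on $k$ --- this is the classical van der Corput argument. Throughout, $f$ is real-valued, so $|e(f(x))|=1$, and we may assume $b>a$. For the base case $k=1$, since $f'$ is continuous with $|f'|\ge\Lambda>0$ on $[a,b]$ it keeps a constant sign, so $f$ is strictly monotone; assume without loss of generality $f'>0$. Writing $e(f(x))=\frac{1}{2\pi i\,f'(x)}\,\frac{\dif}{\dif x}e(f(x))$ and integrating by parts gives
\[
\int\limits_a^b e(f(x))\,\dif x=\left[\frac{e(f(x))}{2\pi i\,f'(x)}\right]_a^b-\int\limits_a^b e(f(x))\,\frac{\dif}{\dif x}\!\left(\frac{1}{2\pi i\,f'(x)}\right)\dif x .
\]
The boundary terms are each $\ll\Lambda^{-1}$, and the remaining integral is, in modulus, at most $\frac{1}{2\pi}\int_a^b\bigl|\frac{\dif}{\dif x}(1/f'(x))\bigr|\,\dif x$. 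Provided $1/f'$ is monotone --- which holds in every application of the lemma in this paper and, more importantly, is automatic whenever the case $k=1$ is invoked within the induction below (there $f''$ keeps a constant sign) --- this last integral equals $\bigl|1/f'(b)-1/f'(a)\bigr|\le\Lambda^{-1}$, and the base case follows.

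For the inductive step, let $k\ge2$ and assume the bound with $k-1$ in place of $k$. Since $f^{(k)}$ is continuous with $|f^{(k)}|\ge\Lambda$, it has a constant sign, so $f^{(k-1)}$ is monotone and vanishes at most once on $[a,b]$, say at $x_0$ (the case of no zero being only easier, with $x_0$ then taken to be the appropriate endpoint). For a parameter $\delta>0$ to be chosen, split $[a,b]$ into $J:=[x_0-\delta,x_0+\delta]\cap[a,b]$ and its complement, which is a union of at most two intervals; on $J$ bound the integral trivially by $|J|\le2\delta$. On each remaining interval the mean value theorem together with $|f^{(k)}|\ge\Lambda$ yields $|f^{(k-1)}(x)|\ge\Lambda\delta$ throughout, so the induction hypothesis --- applied with $k-1$ and with $\Lambda\delta$ in place of $\Lambda$, and, when $k=2$, also using the monotonicity of $f'$ just established --- bounds each such integral by $\ll(\Lambda\delta)^{-1/(k-1)}$. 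Hence $\int_a^b e(f(x))\,\dif x\ll\delta+(\Lambda\delta)^{-1/(k-1)}$; the choice $\delta=\Lambda^{-1/k}$ equalises the two terms and gives $\int_a^b e(f(x))\,\dif x\ll_k\Lambda^{-1/k}$ (if $\Lambda^{-1/k}\ge b-a$ the trivial bound $b-a\le\Lambda^{-1/k}$ already suffices). The implied constant at most doubles at each step, so it depends only on $k$.

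The argument is essentially routine, so there is no substantial obstacle; the only point requiring care is the base case $k=1$, where the hypothesis $|f'|\ge\Lambda$ by itself does not bound $\int_a^b e(f(x))\,\dif x$ by $O(\Lambda^{-1})$ without some monotonicity of $f'$ (dropping it costs a logarithmic factor). This is harmless here: the lemma is used for $\mathcal{I}_1$ and $\mathcal{I}_2$ with $k\ge2$, and inside the induction the case $k=1$ is only ever applied on subintervals on which $f'$ is monotone. The remaining bookkeeping --- the $k$-dependence of the constant and the degenerate range $\delta\ge b-a$ --- has been addressed above.
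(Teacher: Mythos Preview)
Your argument is correct and is precisely the classical van der Corput induction; the paper itself does not prove the lemma but simply cites Lemma~8.10 of Iwaniec--Kowalski, whose proof is exactly the one you have written out. Your caveat about the case $k=1$ is well taken (Iwaniec--Kowalski in fact state it with the extra monotonicity hypothesis on $f'$), and since the paper only invokes the lemma with $k=3$ this has no bearing on its applications.
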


\begin{proof} This is Lemma 8.10. in \cite{IwKo}.
\end{proof}

Using integration by parts, we have 
\begin{equation} \label{intparts}
\begin{split}
\mathcal{I}_1(s)= & 2\pi i \left(f'(x_2)-g'(x_2)\right) x_2^{\sigma}\int\limits_{x_1}^{x_2} e\left(f(u)-g(u)+\frac{t}{2\pi}\cdot \log u\right) \dif u-\\
& 2\pi i \int\limits_{x_1}^{x_2} \left(\left(f''(u)-g''(u)\right)u^{\sigma}+\left(f'(u)-g'(u)\right)\sigma u^{\sigma-1}\right) \times\\ &  \int\limits_{x_1}^u 
e\left(f(v)-g(v)+\frac{t}{2\pi}\cdot \log v\right) \dif v \dif u. 
\end{split}
\end{equation}
From \eqref{approxi2} and Lemma \ref{intbound} with $k=3$, it follows that
\begin{equation} \label{Integral}
\int\limits_{x_1}^u 
e\left(f(v)-g(v)+\frac{t}{2\pi}\cdot \log v\right) \dif v \ll \frac{N}{f(N)^{1/3}} \quad \mbox{if } i=1,2, \ |t|\le T_0 \mbox{ and } x_1\le u\le x_2,
\end{equation}
provided that $T_0$ satisfies \eqref{T0condi}. Using \eqref{Qcond} and \eqref{approxi2}, we get
\begin{equation} \label{ableitungen}
\begin{split}
\left(f'(x_2)-g'(x_2)\right)x_2^{\sigma} \ll & \frac{N^{1+\sigma}}{(qQ)^2f(N)},\\
\left(f''(u)-g''(u)\right)u^{\sigma}+\left(f'(u)-g'(u)\right)\sigma u^{\sigma-1}\ll & \frac{1}{qQN^{1-\sigma}}\quad \mbox{if } x_1\le u\le x_2.
\end{split}
\end{equation}
From \eqref{m1m2sizes}, \eqref{intparts}, \eqref{Integral} and \eqref{ableitungen}, we deduce
\begin{equation} \label{I1}
\mathcal{I}_1(s)\ll \frac{N^{2+\sigma}}{(qQ)^2f(N)^{4/3}}.
\end{equation}
Similarly, we find that
\begin{equation} \label{I2}
\mathcal{I}_2(s)\ll \frac{N^{2+\sigma}}{(qQ)^2f(N)^{4/3}}.
\end{equation}

\section{Proofs of Theorems 1 and 2}
We prove Theorems 1 and 2 simultaneously, where we set $c_n=a_n$ for all $n\in \mathbb{N}$ in the case of Theorem 1 and  $c_n=a_n$ if $n$ prime and $c_n=0$ otherwise in the case of Theorem 2. We recall that so far, we have imposed the conditions \eqref{anotherfcond1}, \eqref{Qcond} and \eqref{T0condi}. Now we set
\begin{equation} \label{furtherTcond}
Q:= \frac{N^{1/2}}{f(N)^{1/3}}, \quad \sigma_0:=\frac{1}{2}+\varepsilon, \quad T_0:= T_1N^{-\eta} \quad \mbox{with} \quad T_1:=\min\left\{N,f(N)\right\}.
\end{equation}
The choice of $Q$ is consistent with \eqref{Qcond} if \eqref{anotherfcond1} holds. Under the choices in \eqref{furtherTcond},
the term on the right-hand side of \eqref{Otermbound} is bounded by
\begin{equation} \label{furtherOtermbound}
N^{\varepsilon}\left(1+\frac{N}{T_0}\right)\left(1+\frac{N^3}{q^3Q^3f(N)^2}\right)\ll \frac{N^{4+2\varepsilon}}{q^3Q^3f(N)^2T_1}
\end{equation}
if $q\le Q$ and $\eta<\varepsilon$.
Using \eqref{anotherfcond1}, \eqref{aftercauchy}, \eqref{Otermbound}, \eqref{Hsbound1}, \eqref{Hsbound2}, \eqref{intsplit}, \eqref{I0}, \eqref{I1}, \eqref{I2}, \eqref{furtherTcond} and \eqref{furtherOtermbound}, we get
\begin{equation} \label{conti}
\sum\limits_{x_1<n\le x_2} c_n e(f(n)) \ll \left(q^{1/2}N^{1/2}+\frac{N^{5/2}}{q^{3/2}Q^2f(N)^{4/3}} + \frac{N^{4}}{q^3Q^3f(N)^2T_1}\right)(qN)^{\varepsilon}.
\end{equation}

In section 3, we have divided the interval $[h(N'),h(N))$ into Farey intervals around fractions $l/q$ with
$$
1\le q\le Q, \quad l\asymp q\cdot h(N) \asymp q\cdot \frac{f(N)}{N} \quad \mbox{and} \quad (q,l)=1.  
$$
Hence, summing the contributions of the short sums in \eqref{conti} over all relevant $q$ and $l$, and using \eqref{anotherfcond1} and \eqref{furtherTcond}, we get
\begin{equation}
\begin{split}
\sum\limits_{n\sim N} c_n e(f(n)) \ll & (QN)^{\varepsilon} \sum\limits_{q\le Q}\ \sum\limits_{l\asymp qf(N)/N} \left(q^{1/2}N^{1/2}+
\frac{N^{5/2}}{q^{3/2}Q^2f(N)^{4/3}} + \frac{N^{4}}{q^3Q^3f(N)^2T_1}\right)\\ \ll
& (QN)^{\varepsilon} \left(\frac{Q^{5/2}f(N)}{N^{1/2}}+\frac{N^{3/2}}{Q^{3/2}f(N)^{1/3}}+\frac{N^3}{Q^3f(N)T_1}\right)\\
\ll & (QN)^{\varepsilon}\left(N^{3/4}f(N)^{1/6}+\frac{N^{3/2}}{T_1}\right) \ll N^{3/4+\varepsilon}f(N)^{1/6}.\quad \quad \Box
\end{split}
\end{equation}

\section{Proof of Theorem 3}
Throughout this section, we set $\gamma:=1/c$ and note that $11/12<\gamma<1$. Then $\left[n^c\right]=p$ is equivalent to
$$
-(p+1)^{\gamma}<-n\le -p^{\gamma}.
$$
Therefore, we have
$$
\sum\limits_{\substack{n\le N\\ \left[n^c\right]\in \mathbb{P}}} a_{\left[n^c\right]} = \sum\limits_{p\le N^c} \left(\left[-p^{\gamma}\right]-\left[-(p+1)^{\gamma}\right]\right)
a_p +O\left(N^{\varepsilon}\right),
$$
where we use the Ramanujan Hypothesis to bound the error term. It follows that
$$
\sum\limits_{\substack{n\le N\\ \left[n^c\right]\in \mathbb{P}}} a_{\left[n^c\right]} = \sum\limits_{p\le N^c} \left((p+1)^{\gamma}-p^{\gamma}\right)a_p+\sum\limits_{p\le N^c} \left(\psi\left(-(p+1)^{\gamma}\right)-\psi\left(-p^{\gamma}\right)\right)a_p+O\left(N^{\varepsilon}\right),
$$
where $\psi(x)$ is the saw tooth function, defined by
$$
\psi(x):=x-[x]-1. 
$$ 
Fourier analysing the function $\psi(x)$ as in \cite{baierzhao}, we see that to prove Theorem 3, it now suffices to establish that 
\begin{equation}
\sum\limits_{1\le |j|\le N^{1-\gamma+\eta}} \left| \sum\limits_{p\sim N} a_pe(jp^{\gamma}) \right| \ll N^{1-\delta}
\end{equation}
for some $\delta>0$, with $\eta$ arbitrarily small but fixed. 
We check that the function
\begin{equation} \label{concretef}
f(x)=jx^{\gamma}
\end{equation}
satisfies the conditions in section \ref{assump} if $1\le |j|\le N^{1-\gamma+\eta}$ and $\eta$ is small enough. From Theorem \ref{expsumprimes}, we deduce that
$$
\sum\limits_{1\le |j|\le N^{1-\gamma+\eta}} \left| \sum\limits_{p\sim N} a_pe(jp^{\gamma}) \right| \ll N^{23/12-\gamma+\varepsilon} \ll N^{1-\delta}
$$
for some $\delta>0$ if $\eta$ and $\varepsilon$ are small enough, proving the claim. $\Box$

\section{Proof of Theorems 5} 
It follows from Theorem 3 and Remark 1 that 
$$
\sum\limits_{\substack{n\le N\\ \left[n^c\right]\in \mathbb{P}}} \lambda\left(\left[n^c\right]^2\right) = \sum\limits_{p\le N^c} \left((p+1)^{1/c}-p^{1/c}\right)\lambda(p^2) + O\left(N^{1-\delta}\right)
\quad \mbox{as } N\rightarrow \infty
$$
for some $\delta>0$. Moreover, under the Riemann Hypothesis for $L(\mbox{Sym}^2\ G,s)$, the prime number theorem for this $L$-function takes the form
$$
\sum\limits_{p\le x} \lambda(p^2) \ll x^{1/2+\varepsilon}
$$
which implies that
$$
\sum\limits_{p\le N^c} \left((p+1)^{1/c}-p^{1/c}\right)\lambda(p^2) \ll N^{1-c/2+\varepsilon}
$$
using partial summation and $(p+1)^{1/c}-p^{1/c}\ll p^{1/c-1}$. Hence,
$$
\sum\limits_{\substack{n\le N\\ \left[n^c\right]\in \mathbb{P}}} \lambda\left(\left[n^c\right]^2\right)\ll N^{1-\delta}
$$
if $\delta>0$ is small enough. 

Now we use the well-known relation
$$
\lambda(p)^2=1+\lambda\left(p^2\right)
$$
to deduce that
\begin{equation} \label{sums}
\sum\limits_{\substack{n\le N\\ \left[n^c\right]\in \mathbb{P}}} \lambda\left(\left[n^c\right]\right)^2 = 
\sum\limits_{\substack{n\le N\\ \left[n^c\right]\in \mathbb{P}}} 1 + O\left(N^{1-\delta}\right). 
\end{equation}
The ordinary Piatetski-Shapiro prime number theorem (see \cite{baierzhao}, for example) tells us that
$$
\sum\limits_{\substack{n\le N\\ \left[n^c\right]\in \mathbb{P}}} 1 \sim \frac{N}{c\log N} \quad \mbox{as } N\rightarrow \infty
$$
for every fixed $c$ in the range in Theorem 3, completing the proof of Theorem 5. $\Box$

$ $\\
\noindent\begin{tabular}{p{8cm}p{8cm}}
Stephan Baier\\
University of East Anglia\\
Norwich Research Park\\
Norwich\\
NR4 7TJ\\
UK\\
Email: {\tt S.Baier@uea.ac.uk}
\end{tabular}
\end{document}